\documentclass[10pt]{amsart}
\usepackage{amssymb}
\usepackage{bm}
\usepackage[centertags]{amsmath}
\usepackage{amsfonts}
\usepackage{amsthm}
\usepackage{mathtools}
\usepackage[numbers, square]{natbib}
\usepackage{hyperref}


\newtheorem{thm}{Theorem}[section]

\newtheorem{lem}[thm]{Lemma}
\newtheorem{prop}[thm]{Proposition}

\theoremstyle{definition}
\newtheorem{defn}[thm]{Definition}

\DeclarePairedDelimiter\abs{\lvert}{\rvert}
\DeclarePairedDelimiter\norm{\lVert}{\rVert}


\DeclarePairedDelimiterX\Set[1]\{\}{%

#1
}

\begin{document}
\title{Simplices and Regular Polygonal Tori in Euclidean Ramsey Theory}

\author{Miltiadis Karamanlis
}
\thanks{National Technical University of Athens, Faculty of Applied Sciences,
Department of Mathematics, Zografou Campus, 157 80, Athens, Greece. Email: 
\href{kararemilt@gmail.com} {\nolinkurl{kararemilt@gmail.com}}. 
Research supported in part by E.L.K.E. of N.T.U.A.}
\subjclass[2020]{Primary 05D10, 05C55 ; Secondary 52C99}
\keywords{Ramsey theory, Euclidean Ramsey theory, Geometry, Discrete Geometry, Simplices, Polygonal Tori}

\begin{abstract} We show that any finite affinely independent set can be isometrically embedded into a regular polygonal torus, that is, a finite product of regular polygons. As a consequence, with a straightforward application of K\v{r}\'{i}\v{z}'s theorem, we get an alternative proof of the fact that all finite affinely independent sets are Ramsey, a result which was originally proved by Frankl and R\"{o}dl.
\end{abstract}


\maketitle
\numberwithin{equation}{section}

\section{Introduction}
Let us start by recalling some basic concepts and classical results of Euclidean Ramsey theory which form the context of the work presented in this note. A finite set $X\subset\mathbb{R}^n$ is \textit{Ramsey} if for every $r\in\mathbb{N}$ there exists $N=N(X,r)\in\mathbb{N}$ such that for any $r$-coloring of $\mathbb{R}^N$ there exists a monochromatic isometric copy $X'\subset\mathbb{R}^N$ of $X$. Ramsey sets  where first introduced and studied by Erd\H{o}s,  Graham, Montgomery, Rothschild,  Spencer, and Straus in  \citep{ErGrMoRSS1973}. There, among others, they proved that Cartesian products of Ramsey sets are Ramsey and that every Ramsey set is spherical, that is, it lies on the surface of some sphere. It is a famous open conjecture due to Graham \cite{Graham1994} that the converse also holds, that is, every spherical set is Ramsey.

Two of the most significant results in  Euclidean Ramsey theory appeared almost simultaneously around the dawn of 90s. Frankl and R\"{o}dl in \cite{FR90} proved that every simplex, that is, any finite set of affinely independent points, is Ramsey. One year later, K\v{r}\'\i\v{z} in \cite{Kriz1991} proved that any finite set with a transitive\footnote{For $X\subseteq\mathbb{R}^n$, a group $G$ of isometries of $X$ is called transitive if for every $x,x'\in X$ there exists $g\in G$ such that $gx=x'$. Sets with a transitive group of isometries will be also called transitive.}  solvable group of isometries is Ramsey. In particular, all regular polygons are Ramsey. 

Frankl and R\"{o}dl in \cite{FR90}, actually showed that all simplices are exponentially Ramsey, that is, for any simplex $X$, there exists $\varepsilon=\varepsilon(X)>0$, such that, every coloring of $\mathbb{R}^n$ with fewer than $(1+\varepsilon)^n$ colors contains a monochromatic copy of $X$ (for further results on the Ramsey properties of simplices see  \cite{MR95} and \cite{Frankl2004}). On the other hand, although K\v{r}\'\i\v{z}'s theorem doesn't provide much quantitative information, it seems to be closely connected with the problem of characterizing the Ramsey sets.

Let us mention here the more recent conjecture of Leader, Russell and Walters \cite{LeaRusWal2012} that Ramsey sets are, up to isometry, exactly the subsets of the finite transitive sets (their conjecture differs from  Graham's since this class of sets is strictly contained in that of spherical sets  \cite{LeaRusWal2011}, \cite{LeaRusWal2012}, \cite{Eberhard2013}). The starting point of their conjecture was the observation that all known Ramsey sets  embed\footnote{Throughout this note, we say that a set $X\subset\mathbb{R}^n$ \emph{embeds into} a set $Y\subset\mathbb{R}^m$, if there exists $f:X\to Y$, such that $\|f(x)-f(x')\|=\|x-x'\|$,  for every $x,x'\in X$ (where $\|\cdot\|$ denotes the usual Euclidean norm).} into some transitive set. In the simple case of triangles (or even for some kinds of quadrilaterals, such as the  isosceles trapezoids), there are elementary geometric constructions demonstrating that they can be embedded into a three dimensional (twisted) prism with a transitive solvable group of isometries (see for example  \cite{johnson_2006}, \cite{LeaRusWal2012} for more details).

It was a natural question for us whether this fact has a higher dimensional analogue, namely, whether all simplices embed into finite sets with a transitive solvable group of isometries. In this note, we answer this question positively in the simplest possible way, by using the following generalization of prisms.
\begin{defn} Let $(T_i)_{i=1}^n$ be a finite sequence of regular polygons in $\mathbb{R}^2$. The product
$\displaystyle T=\prod_{i=1}^n T_i=\left\{(t_i)_{i=1}^n \ : t_i\in T_i \ \forall i=1,\dots,n \right\}\subseteq \mathbb{R}^{2n}$
will be called  \emph{regular polygonal torus}.
\end{defn}
We may view the regular polygonal tori as discrete versions of the so called \emph{Clifford tori}, i.e. products of finitely many circles.
Notice that regular polygonal tori have an abelian transitive group of isometries. Moreover, using some elements from Linear Algebra, for example the fact that commuting  unitary transformations admit a simultaneous diagonalization, it follows that every finite set with a transitive abelian group of isometries is actually a subset of a regular polygonal torus.

Our main result is the following.
\begin{thm}\label{thm:Simplices_Into_Tori}
Every simplex embeds into a regular polygonal torus.
\end{thm}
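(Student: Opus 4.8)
The plan is to forget points and work entirely with the $(n+1)\times(n+1)$ squared-distance matrix $D=\bigl(\|v_a-v_b\|^2\bigr)_{a,b=0}^n$ of the simplex $\{v_0,\dots,v_n\}$, a symmetric matrix with zero diagonal. A point of a regular polygonal torus $\prod_i T_i$ has, in the $i$-th planar factor of radius $\rho_i$, the form $\rho_i e^{\mathrm{i}\theta}$ with $\theta$ a rational multiple of $2\pi$ (a vertex of the polygon). If vertex $a$ is sent to the polygon-vertex at angle $\theta_a^{(i)}$ in factor $i$, that factor contributes $2\rho_i^2\bigl(1-\cos(\theta_a^{(i)}-\theta_b^{(i)})\bigr)$ to the squared distance of $a$ and $b$. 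Writing $N(\theta):=\bigl(1-\cos(\theta_a-\theta_b)\bigr)_{a,b}$ for the ``circle matrix'' of an angle vector $\theta$, an isometric embedding into a product of regular polygons is therefore exactly a representation $D=\sum_i 2\rho_i^2\,N(\theta^{(i)})$ with $\rho_i>0$ and each $\theta^{(i)}$ having entries that are rational multiples of $2\pi$. Thus Theorem~\ref{thm:Simplices_Into_Tori} reduces to the single statement that $D$ lies in the convex cone $\mathcal K$ generated by the rational circle matrices $N(\theta)$.

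I would then establish two facts. First, $\overline{\mathcal K}=\mathcal D$, where $\mathcal D$ is the full cone of Euclidean squared-distance matrices on $n+1$ points: each $N(\theta)$ is itself the squared-distance matrix of the points $e^{\mathrm{i}\theta_a}$ on the unit circle, so $\mathcal K\subseteq\mathcal D$; conversely any $D\in\mathcal D$ decomposes as a sum $\sum_\ell D_{\phi^{(\ell)}}$ of one-dimensional squared-distance matrices $D_\phi=\bigl((\phi_a-\phi_b)^2\bigr)$ (the coordinates of any Euclidean realization), and each $D_\phi$ is a limit of circle matrices, since placing the $\phi_a$ at angles $\phi_a/\rho$ on a circle of radius $\rho$ gives $2\rho^2 N(\phi/\rho)\to D_\phi$ as $\rho\to\infty$, with rational angles dense enough to keep these limits in $\overline{\mathcal K}$. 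Hence $\mathcal D=\mathrm{conic\ hull}\{D_\phi\}\subseteq\overline{\mathcal K}\subseteq\mathcal D$. Second, the squared-distance matrix of a \emph{simplex} is an interior point of $\mathcal D$: a hollow symmetric $D$ lies in $\mathcal D$ iff it is conditionally negative semidefinite, and it comes from affinely independent points precisely when it is strictly conditionally negative definite on $\{x:\sum_a x_a=0\}$, an open condition. So simplices are exactly the interior points of $\mathcal D$.

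These combine through a soft convexity trick. Since $\mathcal K$ is a convex cone with $\overline{\mathcal K}=\mathcal D$ full-dimensional, $\mathcal K$ has nonempty interior and $\mathrm{int}\,\mathcal K=\mathrm{int}\,\overline{\mathcal K}=\mathrm{int}\,\mathcal D$; as the interior of a convex set is contained in the set, every interior point of $\mathcal D$—in particular every simplex—already lies in $\mathcal K$ itself. This yields $D=\sum_i 2\rho_i^2\,N(\theta^{(i)})$ with finitely many rational angle vectors $\theta^{(i)}$, i.e. the sought embedding: send vertex $a$ to the tuple of polygon-vertices at angles $\theta_a^{(i)}$.

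The step I expect to be the real obstacle—or rather the point that forces this indirect route—is that one cannot hope for an explicit finite decomposition for \emph{degenerate} configurations: three collinear points at $0,1,2$ require the ratio of two of the squared distances to equal $4$, whereas every $N(\theta)$, and hence every finite nonnegative combination, yields ratio $4\cos^2(\cdot)<4$, so such matrices live only on the boundary of $\mathcal D$ as genuine limits. The whole force of the argument is that affine independence pushes the simplex into the \emph{interior}, where the identity $\mathrm{int}\,\overline{\mathcal K}=\mathrm{int}\,\mathcal K$ applies. Consequently the two technical points to nail down carefully are the closure equality $\overline{\mathcal K}=\mathcal D$ and the interior characterization of simplices; everything else is formal.
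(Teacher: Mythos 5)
Your proof is correct, and it takes a genuinely different route from the paper's. The paper is semi-constructive: regular simplices embed explicitly into $(m,r)$-regular tori (Lemma \ref{lem:Regular_Into_Torus}); this is upgraded to almost regular simplices via the Frankl--Pach--Reiher--R\"{o}dl lemma (Lemma \ref{lem:Almost_Regular_Matrix}, Proposition \ref{prop:Almost_regular_simplex_into_torus}); every finite set $\delta$-embeds into a torus by approximating coordinate lines with large polygons (Lemma \ref{lem:tori_dense_one_dim}, Proposition \ref{prop:tori_dense}); and finally the $\delta$-errors are absorbed into the slack $\alpha^2$ that a regular expansion provides (Proposition \ref{prop:Plus_Epsilon_Into_Torus}), with Schoenberg's theorem entering as ``every simplex is a regular expansion'' (Lemma \ref{lem:Schonberg}). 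Your argument rests on the same two pillars --- density of torus-realizable metrics (your $\overline{\mathcal{K}}=\mathcal{D}$ is in substance Subsection \ref{subsec:Tori_are_dense}) and quantitative slack coming from affine independence (your ``simplices are interior points of $\mathcal{D}$'' is Schoenberg's criterion of strict conditional negative definiteness, playing exactly the role that the regular expansion's $+\alpha^2$ plays in the paper) --- but you replace the entire error-absorption mechanism by the soft convex-analysis identity $\mathrm{int}\,\overline{\mathcal{K}}=\mathrm{int}\,\mathcal{K}$, valid for any convex set with nonempty interior. This is a real simplification: you only need the one inclusion $\mathcal{D}\subseteq\overline{\mathcal{K}}$, convexity of $\mathcal{K}$, full-dimensionality of $\mathcal{D}$, and the easy open-condition implication ``affinely independent $\Rightarrow$ interior point''; moreover your collinear-points remark correctly isolates why interiority is indispensable (a regular polygonal torus lies on a sphere, and a line meets a sphere in at most two points, so degenerate configurations never embed at all --- consistent with the fact that Ramsey sets are spherical). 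What the paper's route buys in exchange is elementary self-containedness --- no convex analysis, and no need to verify the interior characterization of $\mathcal{D}$ --- plus intermediate statements of independent interest, such as the fact that regular and almost regular simplices embed into $m$-regular tori for any prescribed $m\geq 2$. Two details you should spell out in a careful write-up: in the density step the rational perturbation of the angles must be $o(\rho^{-2})$ so that multiplication by $2\rho^{2}$ kills it (fine, since $1-\cos$ is $1$-Lipschitz), and nonemptiness of $\mathrm{int}\,\mathcal{K}$ should be justified by noting that $\mathcal{K}$ and $\overline{\mathcal{K}}=\mathcal{D}$ have the same affine hull, namely the whole space of hollow symmetric matrices.
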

The above theorem provides an alternative proof that simplices are Ramsey via K\v{r}\'\i\v{z}'s theorem, and thus, it creates a link between these two fundamental results. Indeed, by K\v{r}\'\i\v{z}'s theorem, every regular polygonal torus is Ramsey, and since by their definition Ramsey sets are closed under subsets, every set which embeds into a regular polygonal torus is Ramsey.
\section{Notation}
In order to make the statements more precise, we first set up some notation. Let $\mathbb{N}$ be the set of positive integers. For a finite set $X$, by $|X|$ we will denote its cardinality. For $n\in\mathbb{N}$, we set  $[n]=\{1,2,\ldots,n\}$. The $n$-dimensional \emph{Euclidean space} is the vector space $\mathbb{R}^n$ equipped with the usual Euclidean norm $\|\cdot\|$.

For $m\geq 2$ and $r>0$ by $T_{m,r}\subset\mathbb{R}^2$ we  generally denote (the set of vertices of) a regular $m$-gon of circumradius $r$.  Hence, a \emph{regular polygonal torus}  $T$  is a set of the form $T=\prod_{i\in[n]}T_{m_i,r_i}$,  where $(m_i)_{i\in[n]}\in\mathbb{N}^n$ and $(r_i)_{i\in[n]}\in \mathbb{R}^n_{+}$. If  $m_i=m$ for all $i\in[n]$, i.e. $T=\prod_{i\in[n]}T_{m,r_i}$ then $T$ will be called $m$-\emph{regular}. If in addition $r_i=r$ for all $i\in[n]$, then the regular polygonal torus will be denoted as $T=T_{m,r}^n$ and it will be called $(m,r)$-\emph{regular}.

\section{The proof of Theorem \ref{thm:Simplices_Into_Tori}}\label{sec:Properties_of_regular_polygonal_tori}

The proof of Theorem \ref{thm:Simplices_Into_Tori} makes use of a result due to Matou\v{s}ek and R\"{o}dl in \cite{MR95} and shares some common features with the original proof of Frankl and R\"{o}dl in \cite{FR90} that all simplices are Ramsey. The arguments in \cite{FR90} rely on techniques from extremal set theory, including some deep results there such as those found in the work \cite{FR87} of the same authors. Here the proof of the fact that all simplices are Ramsey comes as a corollary of  K\v{r}\'\i\v{z}'s theorem and as a result although it provides worse numerical bounds it is much simpler.

An outline of our proof is as follows. Let us denote by $\mathcal{T}$ the class of all subsets of regular polygonal tori. The first step is to show that $\mathcal{T}$ contains (up to isometry) all regular simplices, as well as small perturbations of them. The second step is to show that $\mathcal{T}$ is ``dense'', in the sense that it contains almost isometric copies of every finite set. In the third step we show that every \emph{regular expansion} (see Definition \ref{def:Regular_Expansion}) of any finite set is contained in $\mathcal{T}$. This actually completes the proof, since by Schoenberg's theorem \cite{Sch1938}, every simplex is of this form.
 
\subsection{Almost regular simplices embed into regular polygonal tori}\label{subsec:Almost_regular_simplices_into_tori}

We start with the following easy lemma, which guaranties that every regular simplex can be embedded into a regular polygonal torus.

\begin{lem}\label{lem:Regular_Into_Torus}
Let $\Delta=\{x_i\}_{i\in[n]}$ be a regular simplex. Then for every $m\geq 2$ there exists $r>0$ such that $\Delta$ embeds into the $(m,r)$-regular polygonal torus.
\end{lem}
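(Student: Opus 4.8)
The plan is to write down an explicit isometric embedding into a torus whose dimension matches the number of vertices. Let $d$ denote the common edge length of the regular simplex $\Delta$, so that $\|x_i - x_j\| = d$ for all $i \neq j$. I would take the target torus to be $T_{m,r}^n$ (that is, $N = n$), fix two \emph{distinct} vertices $p, q$ of the building block $m$-gon $T_{m,r}$, and define $f : \Delta \to T_{m,r}^n$ by sending $x_i$ to the $n$-tuple having $q$ in coordinate $i$ and $p$ in every other coordinate.

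The key computation is that for $i \neq j$ the images $f(x_i)$ and $f(x_j)$ differ in exactly two coordinates, namely $i$ and $j$: in coordinate $i$ we compare $q$ against $p$, in coordinate $j$ we compare $p$ against $q$, while all remaining coordinates agree. Since in a product the squared distance is the sum of the coordinatewise squared distances, this yields $\|f(x_i) - f(x_j)\|^2 = 2\|p - q\|^2$, a quantity independent of the pair $\{i, j\}$. Choosing $p, q$ to be adjacent vertices gives $\|p - q\| = 2r\sin(\pi/m)$, so that every pairwise distance equals $2\sqrt{2}\,r\sin(\pi/m)$.

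It then remains only to match this constant with $d$. Since $\sin(\pi/m) > 0$ for every $m \geq 2$, I would simply set $r = d / \bigl(2\sqrt{2}\,\sin(\pi/m)\bigr) > 0$, which makes $f$ distance-preserving on all pairs, hence an isometric embedding; distinctness of the images (injectivity of $f$) is automatic from distance preservation.

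There is no genuine obstacle here — the lemma is as easy as advertised. The only point worth isolating is the correct assignment of torus points: using the \emph{standard-basis}–type codewords (one distinguished coordinate per vertex) is precisely what forces any two images to differ in exactly two coordinates, and therefore to be equidistant. Taking $p, q$ adjacent merely makes the constant explicit; any fixed pair of distinct vertices of the $m$-gon would work equally well.
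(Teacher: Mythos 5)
Your proposal is correct and is essentially identical to the paper's proof: the paper uses the same one-distinguished-coordinate construction (sending $x_i$ to the tuple with one adjacent vertex $p$ in coordinate $i$ and the other vertex $p'$ elsewhere) and chooses $r$ so that the $m$-gon has side length $\alpha/\sqrt{2}$, which is exactly your choice $r = d/\bigl(2\sqrt{2}\,\sin(\pi/m)\bigr)$. Your closing remark that any pair of distinct vertices of the $m$-gon would do is a valid (if inessential) observation beyond what the paper states.
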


\begin{proof}
Let $m\geq 2$ and $\Delta=\{x_i\}_{i\in[n]}$ be a regular simplex with side length $\alpha$. We may choose $r>0$ such that the regular $m$-gon $T_{m,r}$ has side length $\alpha/\sqrt{2}$. Let $p$ and $p'$ be two adjacent vertices of $T_{m,r}$. For every $x_i\in \Delta$ let $\widetilde{x}_i=(\widetilde{x}_{ij})_{j\in[n]}\in T_{m,r}^n$ where
\[\widetilde{x}_{ij}	=	\begin{cases}
								p	& \text{ if } j=i\\
								p'	&\text{ otherwise}
					 		\end{cases}
\]
It is straightforward to see that $\norm{\widetilde{x}_i-\widetilde{x}_{i'}}^2=2\norm{p-p'}^2=\alpha^2$ for every $i\neq i'$. Hence, $\widetilde{\Delta}=\{\widetilde{x}_i\}_{i\in[n]}\subset T_{m,r}^n$ is isometric to $\Delta$.
\end{proof} 

The next step is to generalize the above fact by showing that small perturbations of regular simplices also embed into some regular polygonal torus. In what follows, given a matrix $A=(\alpha_{ij})_{i,j\in[n]}$ and a subset $Z=\{z_i\}_{i\in[n]}$ of some Euclidean space, we say that $A$ \emph{is realized from} $Z$ if $\norm{z_i-z_j}=\alpha_{ij}$ for every $i,j\in[n]$. 

The following lemma is a reformulation of a recent result due to Frankl, Pach, Reiher and R\"{o}dl (Lemma 4.9 in \cite{FraPaReRoe2018}).

\begin{lem}\label{lem:Almost_Regular_Matrix}
Let $A=(\alpha_{ij})_{i,j\in[n]}$ be a $n\times n$ real symmetric matrix such that $\alpha_{ii}=0$ for every $i\in[n]$ and $\alpha_{ij}>0$ for every $i,j\in[n]$ with $i\neq j$. Let $\alpha_{\max}=\max_{i,j}\alpha_{ij}$ and suppose that 
\begin{equation}\label{Eq:Almost_Regular_Matrix}
\sum_{1\leq i<j\leq n} (\alpha_{\max}^2-\alpha_{ij}^2)<\alpha_{\max}^2.
\end{equation}
Then there exists a family $\{\Delta_l\}_{l\in[\ell]}$ of regular simplices, where $\ell\leq\binom{n}{2}$, such that $A$ is realized from an affinely independent subset of the product $\prod_{l\in[\ell]}\Delta_l$.
\end{lem}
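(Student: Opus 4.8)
The plan is to realize $A$ as the squared-distance matrix of $n$ suitably chosen points inside a Cartesian product of regular simplices. The mechanism I would exploit is that in such a product squared distances add coordinatewise, while a single regular-simplex factor contributes to the squared distance of a pair only one of two values: $0$ if the two points occupy the same vertex and the squared side length otherwise (the same phenomenon already used in Lemma \ref{lem:Regular_Into_Torus}).

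First I would dispose of the cases $n\le 2$, which are trivial ($A$ being realized by a single point, or by a single segment, itself a regular simplex), and assume $n\ge 3$. Writing $\delta_{ij}=\alpha_{\max}^2-\alpha_{ij}^2\ge 0$ and $S=\sum_{i<j}\delta_{ij}$, the hypothesis \eqref{Eq:Almost_Regular_Matrix} becomes simply $S<\alpha_{\max}^2$. The construction assigns to each pair $(p,q)$ with $1\le p<q\le n$ a regular simplex $\Delta_{pq}$ on $n-1$ vertices of side length $\sqrt{w_{pq}}$, together with a map $\sigma_{pq}\colon[n]\to\Delta_{pq}$ sending $p$ and $q$ to one common vertex and the remaining $n-2$ indices injectively to the other vertices. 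Placing $z_i=(\sigma_{pq}(i))_{1\le p<q\le n}$ in $\prod_{p<q}\Delta_{pq}$, the only coincidence $\sigma_{pq}(i)=\sigma_{pq}(j)$ happens when $\{i,j\}=\{p,q\}$, so that $\norm{z_i-z_j}^2=\sum_{\{p,q\}\ne\{i,j\}}w_{pq}=W-w_{ij}$, where $W=\sum_{p<q}w_{pq}$. To match $A$ I therefore need $w_{ij}=W-\alpha_{\max}^2+\delta_{ij}$; summing this over the $N=\binom{n}{2}$ pairs forces $W=(N\alpha_{\max}^2-S)/(N-1)$, which is well defined since $N-1\ge 2$.

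The hard part, and the only step where the hypothesis is genuinely used, is to check that these prescribed weights are admissible, i.e. that $w_{pq}>0$ so that each $\Delta_{pq}$ is a genuine regular simplex with positive side length. Since $\delta_{pq}\ge 0$, one computes $w_{pq}\ge W-\alpha_{\max}^2=(\alpha_{\max}^2-S)/(N-1)$, so positivity of all weights is equivalent to $S<\alpha_{\max}^2$, which is exactly \eqref{Eq:Almost_Regular_Matrix}. I expect the bookkeeping behind this construction, namely deciding that the factor indexed by $(p,q)$ is the one that shortens precisely the pair $\{p,q\}$ and verifying that the resulting linear system for the weights has a strictly positive solution under the stated inequality, to be the conceptual crux; the remaining verifications are routine.

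Finally I would check that $\{z_i\}_{i\in[n]}$ is affinely independent, which follows directly from the product structure rather than from any perturbation estimate on $A$. If $\sum_i c_i z_i=0$ with $\sum_i c_i=0$, then reading off the coordinate in a fixed factor $\Delta_{pq}$ yields an affine dependence among its vertices in which the shared vertex carries weight $c_p+c_q$ and every other vertex carries the weight $c_i$ of its unique preimage; since the $n-1$ vertices of a regular simplex are affinely independent, this forces $c_i=0$ for every $i\notin\{p,q\}$. As $n\ge 3$, each index $k$ is omitted by some pair $(p,q)$, whence $c_k=0$ for all $k$. Thus $\{z_i\}$ is an affinely independent subset of $\prod_{p<q}\Delta_{pq}$ realizing $A$, and the number of factors is $\ell=\binom{n}{2}$, within the claimed bound.
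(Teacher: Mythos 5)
Your proposal is correct, but it uses a genuinely different decomposition from the paper's proof (which reproduces Lemma 4.9 of Frankl--Pach--Reiher--R\"{o}dl). The paper keeps one distinguished factor: an $n$-vertex regular simplex $\Delta$ of side length $b=\sqrt{\alpha_{\max}^2-S}$, which is real and positive precisely because of \eqref{Eq:Almost_Regular_Matrix}, together with one $(n-1)$-vertex simplex $\Delta_{ij}$ of side $b_{ij}=\sqrt{\alpha_{\max}^2-\alpha_{ij}^2}$ for each pair with $\alpha_{ij}<\alpha_{\max}$; the points project bijectively onto the vertices of $\Delta$ and share a vertex in $\Delta_{ij}$ exactly for the pair $\{i,j\}$, so the squared distances telescope as $b^2+\sum_{i<j}b_{ij}^2-b_{st}^2=\alpha_{st}^2$ with no linear system to solve, and affine independence is immediate because the projection to the $n$-vertex factor $\Delta$ is a bijection onto an affinely independent set. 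Your construction dispenses with the $n$-vertex factor and instead spreads its contribution uniformly over all $\binom{n}{2}$ pair-factors: in the paper's notation your weights are exactly $w_{pq}=b^2/(N-1)+(\alpha_{\max}^2-\alpha_{pq}^2)$, so hypothesis \eqref{Eq:Almost_Regular_Matrix} enters as strict positivity of the solution of your linear system (checked, as you note, at a pair attaining $\alpha_{\max}$). The price for omitting the $n$-vertex factor is that you must prove affine independence by the separate combinatorial argument (projecting onto a factor $\Delta_{pq}$ that omits a given index, which needs $n\ge 3$) and treat $n\le 2$ by hand, since your system degenerates when $N=1$. Both arguments are sound and both respect the bound $\ell\le\binom{n}{2}$ (the paper because the maximal pair contributes no factor, you because you use exactly $\binom{n}{2}$ factors); the paper's version is leaner on bookkeeping, while yours is more uniform in that every factor is an $(n-1)$-vertex simplex with explicitly solved side lengths.
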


In order to keep this note self contained we quote below the proof given in \cite{FraPaReRoe2018}.

\begin{proof}
Let $A=(\alpha_{ij})_{i,j\in[n]}$ and let $\alpha_{\max}=\max_{i,j}\alpha_{ij}$. We set
\[
b=\sqrt{\alpha_{\max}^2-\sum_{1\leq i<j\leq n} (\alpha_{\max}^2-\alpha_{ij}^2)}\text{ and } b_{ij}=\sqrt{\alpha_{\max}^2-\alpha_{ij}^2}\text{ for } 1\leq i< j\leq n.
\]

Let $\Delta\subset\mathbb{R}^{n-1}$ be a regular simplex with $n$ vertices and side length $b$. For $1\leq i<j\leq n$ such that $b_{ij}>0$ let $\Delta_{ij}\subset\mathbb{R}^{n-2}$ be a regular simplex with $n-1$ vertices and side length $b_{ij}$. We define
\[
X=\Delta\times\prod_{\substack{i<j\\b_{ij}> 0}}\Delta_{ij}.
\]
Let $\pi:X\to\Delta$ and $\pi_{ij}:X\to\Delta_{ij}$ be the canonical projections. It is not hard to see that we can choose $Z=\{z_i\}_{i=1}^n\subset X$ such that $\pi(Z)=\Delta$, $\pi_{ij}(Z)=\Delta_{ij}$ and $\pi_{ij}(z_i)=\pi_{ij}(z_j)$ for every $z_i,z_j\in Z$ with $i<j$ and $b_{ij}>0$. Notice that for every $s, t\in[n]$ the following holds,
\[\norm{z_{s}-z_t}^2=b^2+\left(\sum_{1\leq i<j\leq n}\!\!\!\!\!b_{ij}^2\right)-b_{s t}^2=\alpha_{st}^2.\]

Finally, since affine dependence of $Z$ implies affine dependence of $\pi(Z)=\Delta$, the set $Z$ must be affinely independent.
\end{proof}

In what follows, a matrix which satisfies the requirements of Lemma \ref{lem:Almost_Regular_Matrix} will be called \emph{almost regular}, while every subset of some Euclidean space that realizes an almost regular matrix will be called \emph{almost regular simplex}. 
 
The next proposition is the extension of Lemma \ref{lem:Regular_Into_Torus} in the case of almost regular simplices. 
 
\begin{prop}\label{prop:Almost_regular_simplex_into_torus}
Let $Z=\{z_i\}_{i\in[n]}$ be an almost regular simplex. Then for every $m\geq 2$ there exists an $m$-regular polygonal torus $T$ such that $Z$ embeds into $T$.  
\end{prop}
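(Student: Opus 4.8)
The plan is to reduce to the two previous results by passing to products. First I would apply Lemma~\ref{lem:Almost_Regular_Matrix}: since $Z$ is an almost regular simplex it realizes an almost regular matrix, so up to isometry we may regard $Z$ as an affinely independent subset of a product $\prod_{l\in[\ell]}\Delta_l$ of regular simplices, with $\ell\leq\binom{n}{2}$. It therefore suffices to embed this entire product isometrically into a single $m$-regular polygonal torus, and then restrict to $Z$.

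The key idea is to treat each factor separately via Lemma~\ref{lem:Regular_Into_Torus}, using the \emph{same} value of $m$ throughout. Writing $k_l$ for the number of vertices of $\Delta_l$, the lemma supplies for each $l$ a radius $r_l>0$ together with an isometric embedding $f_l\colon\Delta_l\to T_{m,r_l}^{k_l}$. I would then assemble these into the product map $f=\prod_{l}f_l$, defined by $(t_l)_l\mapsto(f_l(t_l))_l$. Since the squared Euclidean distance on a Cartesian product is the sum of the squared distances in the factors, for any two points $u=(u_l)_l$ and $v=(v_l)_l$ of $\prod_l\Delta_l$ we obtain $\norm{f(u)-f(v)}^2=\sum_l\norm{f_l(u_l)-f_l(v_l)}^2=\sum_l\norm{u_l-v_l}^2=\norm{u-v}^2$, the middle equality using that each $f_l$ preserves distances; hence $f$ is an isometric embedding. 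Restricting $f$ to $Z$ yields the embedding we want.

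It remains to identify the target. The set $T:=\prod_{l\in[\ell]}T_{m,r_l}^{k_l}$ is, after flattening the iterated product, simply a Cartesian product of $\sum_l k_l$ regular $m$-gons whose circumradii lie among the $r_l$; hence $T$ is an $m$-regular polygonal torus. The only point that requires care—more a bookkeeping matter than a genuine obstacle—is that producing an $m$-regular rather than an arbitrary torus forces a single common $m$ across all factors in the second step. This is exactly what Lemma~\ref{lem:Regular_Into_Torus} permits, since it applies for every $m\geq2$; the price is only that the radii $r_l$ may differ, which is harmless because the notion of an $m$-regular torus constrains the number of sides but not the circumradii. Everything else—that a product of isometric embeddings is again an isometric embedding, and that an iterated product of regular $m$-gons is again a regular polygonal torus—is routine.
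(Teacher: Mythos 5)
Your proposal is correct and follows exactly the paper's own argument: apply Lemma~\ref{lem:Almost_Regular_Matrix} to embed $Z$ into a product of regular simplices, then use Lemma~\ref{lem:Regular_Into_Torus} with the same $m$ on each factor and take the product of the resulting embeddings into $T=\prod_{l\in[\ell]}T_{m,r_l}^{n_l}$. The only difference is that you spell out the routine verification (additivity of squared distances over Cartesian products) that the paper leaves implicit.
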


\begin{proof}
Let $m\geq 2$. By Lemma \ref{lem:Almost_Regular_Matrix} there exists a family $\{\Delta_l\}_{l\in[\ell]}$ of regular simplices, where $\ell\leq\binom{n}{2}$, such that $Z$ embeds into $\prod_{l\in[\ell]}\Delta_l$. For every $l\in[\ell]$ let $n_l=\abs{\Delta_l}$. By Lemma \ref{lem:Regular_Into_Torus} for every $l\in[\ell]$ there exists $r_l>0$ such that $\Delta_l$ embeds into an $(m,r_l)$-regular polygonal torus of the form $T_{m,r_l}^{n_l}$. Hence, $Z$ embeds into the $m$-regular polygonal torus $T=\prod_{l\in[\ell]}T_{m,r_l}^{n_l}$. 
\end{proof}

\subsection{Every finite set almost embeds into a regular polygonal torus}\label{subsec:Tori_are_dense}

We start with the following definition.  

\begin{defn}\label{def:Almost_Embedding}
Let $\delta>0$, $X\subset \mathbb{R}^k$ and $T\subset \mathbb{R}^{\ell}$. We say that $f:X\to T$ is a \emph{$\delta$-embedding} of $X$ into $T$, if $f$ is an injection and \[\abs*{\norm{f(x)-f(x')}^2-\norm{x-x'}^2}<\delta\] for every $x,x'\in X$. 
\end{defn}
The next lemma formalizes the intuitively obvious fact that every line segment can be approximated with arbitrary accuracy by a large enough circle.  
\begin{lem}\label{lem:tori_dense_one_dim}
 Let  $\delta>0$ and  $X\subset \mathbb{R}$ with $|X|\geq 2$.  Let  $n_0\in\mathbb{N}$ be such that 
\begin{equation}\label{EQ1}
n_0^{-1}\leq  |x-x'|\leq   n_0 \  \text{ for every } x,x'\in X \ \text{ with } x\neq x'.
\end{equation}
Then for every  $n\geq 2\pi n_0^3\delta^{-1}$ the set $X$ is $\delta$-embeddable into a  regular  polygon  $T_{m,r}$ with  $m=n^3$ and $\displaystyle r=\frac{n_0n}{2\pi}$.
\end{lem}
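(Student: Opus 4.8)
The plan is to make rigorous the intuition stated just before the lemma: a segment is well approximated by a long circular arc. Concretely, I will lay the points of $X$ out along an arc of the circle circumscribing $T_{m,r}$, snap each point to its nearest polygon vertex, and then control the two errors this introduces---replacing the exact arc position by a vertex, and replacing arc length by chord length.

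First I would translate $X$ so that $\min X = 0$; since \eqref{EQ1} bounds the diameter of $X$ by $n_0$, this places $X \subseteq [0, n_0]$. The circumscribed circle has radius $r = n_0 n/(2\pi)$ and hence circumference $2\pi r = n_0 n$, which for $n \ge 2$ vastly exceeds $n_0$, so $X$ fits along an arc with no wraparound. Parametrizing the circle by arc length through $P(s) = (r\cos(s/r), r\sin(s/r))$, the $m = n^3$ vertices of $T_{m,r}$ occupy the arc positions $0, g, 2g, \dots$ with spacing $g = 2\pi r/m = n_0/n^2$. For each $x \in X$ I set $f(x) = P(s_x)$, where $s_x$ is the arc position of the vertex nearest to $x$, so that $\abs{s_x - x} \le g/2$. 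Injectivity is then immediate: by \eqref{EQ1} distinct points of $X$ are at distance at least $n_0^{-1}$, while the vertex spacing satisfies $g = n_0/n^2 \le n_0^{-1}$ in the relevant range of $n$, so no two points are snapped to the same vertex.

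For the metric estimate I would fix $x, x' \in X$ and write $d = \abs{x-x'}$ and $\sigma = \abs{s_x - s_{x'}}$, noting that $\abs{\sigma - d} \le g$ and $\norm{f(x)-f(x')}^2 = 4r^2\sin^2(\sigma/(2r))$. Then I split
\[
\abs*{\norm{f(x)-f(x')}^2 - d^2} \le \abs*{4r^2\sin^2(\sigma/(2r)) - \sigma^2} + \abs*{\sigma^2 - d^2}.
\]
The second term is at most $g(\sigma + d) = O(n_0^2/n^2)$. For the first, putting $t = \sigma/(2r)$ and using the elementary bound $0 \le t - \sin t \le t^3/6$ yields $\abs*{4r^2\sin^2 t - \sigma^2} \le \sigma^4/(12 r^2)$; since $\sigma \le 2n_0$ and $r^2 = n_0^2 n^2/(4\pi^2)$, this too is $O(n_0^2/n^2)$.

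Combining the two estimates, the total error is at most $C\, n_0^2/n^2$ for an absolute constant $C$, and the hypothesis $n \ge 2\pi n_0^3 \delta^{-1}$ forces this below $\delta$, giving the desired $\delta$-embedding. The \emph{main obstacle} is simply the bookkeeping in these two bounds---especially the chord-versus-arc defect---where one sees why the exponents are chosen as they are: the factor $m = n^3$ makes the rounding error of order $n_0/n^2$, precisely the order at which the curvature defect $\sigma^4/r^2$ also sits once $r$ is of order $n_0 n$, so that both contributions land at order $n_0^2/n^2$. Beyond tracking these constants the argument is entirely elementary.
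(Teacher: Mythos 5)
Your construction is essentially the paper's: discretize $[0,n_0]$ at scale $n_0/n^2$, send the discretized points to vertices of $T_{m,r}$ via the arc-length correspondence (the paper rounds down where you round to the nearest vertex, which is immaterial), and control separately the rounding error and the chord-versus-arc error. Your two individual estimates are correct as stated; indeed your curvature bound $\sigma^4/(12r^2)$, of order $n_0^2/n^2$, is sharper in order than the paper's, which uses $\abs{\sin^2 t-t^2}\le\abs{t}^3$ and obtains $\sigma^3/(2r)\le \pi n_0^2/n$, only of order $1/n$.

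The gap is in the very last inference, and it is exactly the bookkeeping you deferred. The hypothesis $n\ge 2\pi n_0^3\delta^{-1}$ says precisely that $\delta\ge 2\pi n_0^3/n$, so to deduce ``error $\le C n_0^2/n^2<\delta$'' you need $C n_0^2/n^2\le 2\pi n_0^3/n$, i.e.\ $n_0 n\ge C/(2\pi)$. A hypothesis that scales like $n_0^3/n$ cannot ``force'' the conclusion for an \emph{arbitrary} absolute constant $C$, and with the constant your estimates actually produce ($C\approx 3+16\pi^2/3\approx 56$, the large term coming from the crude bound $\sigma\le 2n_0$) the inference is false for admissible parameters: $n_0=1$, $\delta=\pi$, $n=2$ satisfies the hypothesis, yet $Cn_0^2/n^2\approx 14>\delta$. (The lemma is true there; it is the bound that is too weak.) The paper avoids this by engineering each error term to be at most $\pi\cdot n_0^3/n$, using $n_0\ge 1$ and $n^2\ge n$: its rounding error is $<n_0^3/n^2\le \delta/(2\pi)<\delta/2$ and its chord error is $\le \pi n_0^2/n\le \pi n_0^3/n\le \delta/2$. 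Your argument closes the same way once you keep the constants small: replace $\sigma\le 2n_0$ by $\sigma\le n_0+n_0/n^2$, which for $n\ge 2$ gives a total constant of roughly $(2+1/n^2)+(\pi^2/3)(1+1/n^2)^4\le 10.3\le 4\pi\le 2\pi n_0 n$; the remaining case $n=1$ (where $m=1$) is degenerate and excluded by the paper's standing convention $m\ge 2$. So the route is the right one, but the final step must be carried out with explicit constants rather than an unspecified $C$.
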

\begin{proof}  
Without loss of generality, we may assume that $X\subset [0, n_0]$.  Let $n \geq 2\pi n_0^3\delta^{-1}$ and for every $x\in X$ let $j(x)$ be the unique non negative integer  satisfying
\[
\frac{j(x)}{n^2}\leq \frac{x}{n_0}< \frac{j(x)+1}{n^2}.
\]
By \eqref{EQ1} the correspondence $x\to j(x)$ is a well defined injection from $X$ into $\{0,1,\dots,n^2\}$. 
For every $x\in X$ we set  $\displaystyle y(x)=\frac{j(x)n_0}{n^2}$. Notice that  $y(x)\in [0, n_0]$ and $\displaystyle 0\leq x-y(x)<n_0n^{-2}$. Hence,
\begin{equation}\label{EQ3}
\left|\ |y(x)-y(x')|^2-\left |x-x'\right|^2\right|< \frac{2n_0^2}{n^2}<\frac{\delta}{2},
\end{equation}
for every $x,x'\in X$.

Let   $\displaystyle \left(r, \frac{2\pi j}{m}\right)$, $j=0,1,\dots,m-1$ be a representation of vertices of $T_{m,r}$ in polar coordinates.
Let $m=n^3$, $r=n_0n/2\pi$ and let   $f:X\to T_{m,r}$ defined by 
\[f(x)=\left(r,\ \frac{2\pi j(x)}{m}\right)=\left(r, \frac{ y(x)}{r}\right). \]
Then $f$ is an injection of $X$ into $T_{m,r}$. To finish the proof it remains to show that $f$ is a $\delta$-embedding.  For every $x,x'\in X$ with $x\neq x'$ we have that
\[
\left\|f(x)-f(x')\right\| =2r\sin\left( \frac{|y(x)-y(x')|}{2r}\right).
\]
Noticing that $\displaystyle \left|\sin^2 t-t^2\right|\leq \abs{t^3}$ and setting $\displaystyle t= \frac{|y(x)-y(x')|}{2r}$ we get that 
\begin{equation}\label{EQ4}
\left|\left\|f(x)-f(x')\right\|^2-\left|y(x)-y(x')\right|^2\right|\leq \frac{\pi n_0^2}{n}<\frac{\delta}{2}
\end{equation}
By \eqref{EQ3} and \eqref{EQ4} the proof is completed.
\end{proof}

\begin{prop}\label{prop:tori_dense} For every $\delta>0$ and every finite set $X\subset \mathbb{R}^k$ there exists $m\in\mathbb{N}$ and $r>0$ such that $X$ is $\delta$-embeddable into an $(m,r)$-regular polygonal torus $T^k_{m,r}$.
\end{prop}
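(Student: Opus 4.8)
The plan is to apply the one-dimensional Lemma \ref{lem:tori_dense_one_dim} to each of the $k$ coordinate projections of $X$ and then to take the product of the resulting maps. For $i\in[k]$, write $x^{(i)}$ for the $i$-th coordinate of a point $x$ and let $X^{(i)}=\{x^{(i)}:x\in X\}\subset\mathbb{R}$ be the corresponding projection. The decisive observation is that the target polygon produced by Lemma \ref{lem:tori_dense_one_dim} is $T_{m,r}$ with $m=n^3$ and $r=n_0n/(2\pi)$, and these parameters are determined by the auxiliary integers $n_0$ and $n$ \emph{alone}, not by the particular set being embedded. Hence, if I can run the lemma for all coordinates with one common $n_0$ and one common $n$, every coordinate map will land in the \emph{same} regular polygon $T_{m,r}$, so their product will land in the $(m,r)$-regular torus $T^k_{m,r}$, which is exactly the conclusion sought.

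To arrange this, I first dispose of the trivial case $\abs{X}\le 1$ (a single point $\delta$-embeds into any torus) and set $\delta'=\delta/k$. Since $X$ is finite, I may choose an integer $n_0\ge 2$ large enough that
\[
n_0^{-1}\le\abs{x^{(i)}-{x'}^{(i)}}\le n_0
\]
for every $i\in[k]$ and every $x,x'\in X$ with $x^{(i)}\ne {x'}^{(i)}$; it suffices to take $n_0$ above the maximum, over the finitely many such coordinates and pairs, of the quantities $\abs{x^{(i)}-{x'}^{(i)}}$ and $\abs{x^{(i)}-{x'}^{(i)}}^{-1}$. Now I fix any integer $n\ge 2\pi n_0^3(\delta')^{-1}$ and set $m=n^3$ and $r=n_0n/(2\pi)$. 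For each coordinate $i$ with $\abs{X^{(i)}}\ge 2$, Lemma \ref{lem:tori_dense_one_dim}, applied with $\delta'$ in place of $\delta$ and with this common $n_0$, yields a $\delta'$-embedding $f_i:X^{(i)}\to T_{m,r}$; for each coordinate with $\abs{X^{(i)}}=1$ I let $f_i$ be the constant map to a fixed vertex of $T_{m,r}$.

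I then define $f:X\to T^k_{m,r}$ by $f(x)=\bigl(f_i(x^{(i)})\bigr)_{i\in[k]}$, and the verification is routine. Injectivity holds because two distinct points of $X$ differ in some coordinate $i$, where $f_i$ is injective. For the metric estimate, the squared Euclidean norm on the product $T^k_{m,r}\subset\mathbb{R}^{2k}$ splits over the $k$ planar factors, so
\[
\abs*{\norm{f(x)-f(x')}^2-\norm{x-x'}^2}\le\sum_{i\in[k]}\abs*{\norm{f_i(x^{(i)})-f_i({x'}^{(i)})}^2-\abs{x^{(i)}-{x'}^{(i)}}^2}<k\delta'=\delta ,
\]
where each summand is bounded by the $\delta'$-embedding property of $f_i$ (and vanishes on the constant coordinates). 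Thus $f$ is a $\delta$-embedding of $X$ into $T^k_{m,r}$.

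The one genuine subtlety, and the step I would watch most carefully, is the synchronization of $m$ and $r$ across the coordinates: it is precisely the fact that the lemma's target $T_{m,r}$ is fixed by $n_0$ and $n$ alone that allows a single choice of these two integers to serve all $k$ projections simultaneously. Everything else—extracting a uniform $n_0$ from finiteness, splitting the error budget as $\delta/k$, treating the degenerate constant projections, and reading off injectivity—is bookkeeping made available by the product structure of the torus and the additivity of squared distances.
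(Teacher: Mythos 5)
Your proof is correct and follows essentially the same route as the paper: apply Lemma \ref{lem:tori_dense_one_dim} coordinatewise with error budget $\delta/k$, observing that a single common pair $(m,r)$ can be chosen for all $k$ projections, and then take the product map. You merely make explicit the details the paper leaves implicit (the common choice of $n_0$ and $n$, the degenerate single-point projections, and injectivity), so there is nothing to correct.
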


\begin{proof} 
Let $X\subset \mathbb{R}^k$ and $\delta>0$. Let $\pi_i:\mathbb{R}^k\to \mathbb{R}$, $i\in [k]$  be  the canonical projections. By Lemma \ref{lem:tori_dense_one_dim}, we may choose a common pair $(m,r)\in\mathbb{N}\times \mathbb{R}$  such that for every $i\in [k]$ there exists a $\delta/k$-embedding $f_i :\pi_i(X)\to T_{m,r}$.  It is easy to see that the mapping  $f:X\to \mathbb{R}^{2k}$ defined by $\displaystyle f(x)=\left(f_1\left(\pi_1(x)\right), \dots, f_k\left(\pi_k(x)\right)\right)$ is a $\delta$-embedding of $X$ into $T^k_{m,r}$.
\end{proof}

\subsection{Regular expansions of finite sets embed into a regular polygonal torus}

We will need the following definition. 

\begin{defn}\label{def:Regular_Expansion}
Let $X=\{x_i\}_{i\in[n]}\subset\mathbb{R}^k$ and $Y=\{y_i\}_{i\in[n]}\subset \mathbb{R}^d$. We say that $Y$ 
is a regular expansion of $X=\{x_i\}_{i\in[n]}$, if there exists $\alpha>0$ such that \[\norm{y_i-y_j}^2=\norm{x_i-x_j}^2+\alpha^2\] for every $i,j\in[n]$ with $i\neq j$. 
\end{defn}
It is easy to see that a set $Y=\{y_i\}_{i\in[n]}$ is a regular expansion of $X=\{x_i\}_{i\in [n]}$, if and only if, there exists 
a regular simplex $\Delta=\{z_i\}_{i\in[n]}$ such that the set $Y$ is isometric to the set  $Y'=\{(x_i,z_i)\}_{i\in[n]}\subset X\times \Delta$. In particular, since $Y'$ is affinely independent, every regular expansion of a finite set $X$ is a simplex. It was a crucial point of the proof in \citep{FR90} that this property characterizes all simplices in Euclidean spaces. 
\begin{lem}\label{lem:Schonberg}
Every simplex $Y$ is a regular expansion of some other simplex $X$.
\end{lem}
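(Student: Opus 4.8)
The plan is to recast the statement as a question about squared-distance matrices and to settle it using Schoenberg's criterion together with an elementary perturbation argument. Write $Y=\{y_i\}_{i\in[n]}$ and set $d_{ij}=\norm{y_i-y_j}$. To produce a simplex $X=\{x_i\}_{i\in[n]}$ and an $\alpha>0$ with $\norm{x_i-x_j}^2=d_{ij}^2-\alpha^2$ for all $i\neq j$, it suffices to find a single $\alpha>0$ for which the symmetric array with entries $c_{ij}^2:=d_{ij}^2-\alpha^2$ (and $c_{ii}=0$) is realized by an affinely independent point set in some Euclidean space; those points then form the desired $X$.

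By Schoenberg's theorem, such a realization exists, and is affinely independent, exactly when the associated matrix $M(\alpha)=\bigl(c_{1i}^2+c_{1j}^2-c_{ij}^2\bigr)_{i,j=2}^{n}$ is positive definite. A direct computation shows $M(\alpha)=M(0)-\alpha^2(J+I)$, where $M(0)=\bigl(d_{1i}^2+d_{1j}^2-d_{ij}^2\bigr)_{i,j=2}^{n}$ is the Schoenberg matrix of $Y$ and $J,I$ are the all-ones and identity $(n-1)\times(n-1)$ matrices. The subtracted term $\alpha^2(J+I)$ is precisely the Schoenberg matrix of a regular simplex of side $\alpha$, in agreement with the remark preceding the lemma.

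Now I would invoke the two relevant positivity facts. Since $Y$ is a simplex, $M(0)$ is positive definite; and $J+I$ is positive definite, its eigenvalues being $n$ and $1$. Because positive definiteness is an open condition, a small positive $\alpha$ suffices: concretely, if $\mu>0$ is the least eigenvalue of $M(0)$, then $M(\alpha)\succeq(\mu-n\alpha^2)I$, so every $\alpha$ with $0<\alpha^2<\mu/n$ keeps $M(\alpha)\succ0$. Fixing any such $\alpha$ and letting $X=\{x_i\}_{i\in[n]}$ be the resulting affinely independent realization, we obtain a genuine simplex of which $Y$ is a regular expansion.

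The one point that requires care, and which I would treat as the main obstacle, is to ensure that the reduced matrix remains strictly positive definite rather than merely positive semidefinite: semidefiniteness would still yield a Euclidean realization but could collapse affine independence, whereas the statement demands that $X$ be a simplex. The eigenvalue bound above handles this cleanly, and since it forces the realizing points to be distinct, it simultaneously guarantees $c_{ij}^2>0$ for all $i\neq j$, so no separate check that $\alpha^2<\min_{i\neq j}d_{ij}^2$ is needed.
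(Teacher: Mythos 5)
Your proof is correct and takes essentially the same route as the paper: the paper disposes of this lemma by citing Schoenberg's criterion (deferring the details to Frankl--R\"odl), and your argument is exactly that criterion made explicit --- writing $M(\alpha)=M(0)-\alpha^2(J+I)$ and using the openness of positive definiteness to choose $\alpha^2<\mu/n$. Your closing remark is also sound: positive definiteness of $M(\alpha)$ yields an affinely independent (hence distinct) realization, which forces $c_{ij}^2>0$ automatically, so no separate bound $\alpha^2<\min_{i\neq j}d_{ij}^2$ is needed.
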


Lemma  \ref{lem:Schonberg} is an  immediate  consequence (see \citep{FR90} for details) of  Schoenberg's \cite{Sch1938} characterization of the finite metric spaces which embed into Euclidean spaces (for more information on this significant theorem the reader may refer to \cite{Alfakih2018} and \cite{Wells1975}).

In view of Lemma \ref{lem:Schonberg}, the next proposition completes the proof of Theorem \ref{thm:Simplices_Into_Tori}.    

\begin{prop}\label{prop:Plus_Epsilon_Into_Torus}
Let $X=\{x_i\}_{i\in[n]}\subset\mathbb{R}^k$. Then every regular expansion of $X$ embeds into an $m$-regular polygonal torus $T$ for some $m\in\mathbb{N}$.
\end{prop}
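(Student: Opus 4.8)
The plan is to exploit the extra $\alpha^2$ appearing in a regular expansion to absorb the error of an inexact $\delta$-embedding of $X$, repackaging that error as a genuine almost regular simplex. Fix a regular expansion $Y=\{y_i\}_{i\in[n]}$ of $X$, so that there is $\alpha>0$ with $\norm{y_i-y_j}^2=\norm{x_i-x_j}^2+\alpha^2$ for all $i\neq j$. I will choose a small parameter $\delta>0$ (to be fixed below in terms of $n$ and $\alpha$) and apply Proposition~\ref{prop:tori_dense} to obtain $m\in\mathbb{N}$, $r>0$, and a $\delta$-embedding $f:X\to T^k_{m,r}$. Writing $d_{ij}=\norm{f(x_i)-f(x_j)}^2$, this gives $\abs{d_{ij}-\norm{x_i-x_j}^2}<\delta$ for all $i,j\in[n]$.

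The second step is to produce an almost regular simplex carrying exactly the ``missing'' squared distances. Define a symmetric matrix $B=(\beta_{ij})$ by $\beta_{ii}=0$ and $\beta_{ij}=\sqrt{\alpha^2+\norm{x_i-x_j}^2-d_{ij}}$ for $i\neq j$. Since the radicand equals $\alpha^2+(\norm{x_i-x_j}^2-d_{ij})\in(\alpha^2-\delta,\alpha^2+\delta)$, for $\delta<\alpha^2$ every off-diagonal entry is positive, and moreover $\beta_{\max}^2>\alpha^2-\delta$ while $\beta_{\max}^2-\beta_{ij}^2<2\delta$ for every pair. Hence the left-hand side of the almost-regular inequality \eqref{Eq:Almost_Regular_Matrix} is at most $\binom{n}{2}\cdot 2\delta$, which is strictly less than $\alpha^2-\delta<\beta_{\max}^2$ provided $\delta<\alpha^2/(n^2-n+1)$. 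Fixing such a $\delta$, the matrix $B$ is almost regular; let $W=\{w_i\}_{i\in[n]}$ be an almost regular simplex realizing $B$, so that $\norm{w_i-w_j}^2=\alpha^2+\norm{x_i-x_j}^2-d_{ij}$, and apply Proposition~\ref{prop:Almost_regular_simplex_into_torus} with the same $m$ to embed $W$ into an $m$-regular polygonal torus $T'$ via some $g:W\to T'$.

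It remains to assemble the pieces. The product $T=T^k_{m,r}\times T'$ is again an $m$-regular polygonal torus, and I define $h:Y\to T$ by $h(y_i)=(f(x_i),g(w_i))$. Then for $i\neq j$,
\[
\norm{h(y_i)-h(y_j)}^2=d_{ij}+\norm{w_i-w_j}^2=d_{ij}+\bigl(\alpha^2+\norm{x_i-x_j}^2-d_{ij}\bigr)=\norm{x_i-x_j}^2+\alpha^2=\norm{y_i-y_j}^2,
\]
so $h$ is an isometric embedding of $Y$ into $T$.

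I expect the only real obstacle to be the verification that $B$ satisfies the almost-regular inequality \eqref{Eq:Almost_Regular_Matrix}; this is exactly where the smallness of $\delta$ relative to $\alpha$ and $n$ is needed, and it is the reason a mere $\delta$-embedding of $X$ can be upgraded to an \emph{exact} embedding of its expansion. Everything else is a telescoping of squared distances, in which the approximate terms $d_{ij}$ cancel precisely because the correction simplex was built to carry $\alpha^2+\norm{x_i-x_j}^2-d_{ij}$.
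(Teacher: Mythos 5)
Your proposal is correct and follows essentially the same route as the paper: a $\delta$-embedding of $X$ from Proposition~\ref{prop:tori_dense}, a correction matrix with entries $\sqrt{\alpha^2+\norm{x_i-x_j}^2-\norm{f(x_i)-f(x_j)}^2}$ realized as an almost regular simplex via Lemma~\ref{lem:Almost_Regular_Matrix} and Proposition~\ref{prop:Almost_regular_simplex_into_torus}, and the product embedding $y_i\mapsto(f(x_i),g(w_i))$. The only (harmless) differences are that you fix $\delta<\alpha^2/(n^2-n+1)$ rather than the paper's $\delta=\alpha^2/n^2$, and you spell out the verification of inequality~\eqref{Eq:Almost_Regular_Matrix}, which the paper dismisses as ``easy to check.''
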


\begin{proof}

Let $\alpha>0$, $X=\{x_i\}_{i\in[n]}\subset\mathbb{R}^k$, and $Y=\{y_i\}_{i\in[n]}\subset\mathbb{R}^{d}$ be such that $\norm{y_i-y_j}^2=\norm{x_i-x_j}^2+\alpha^2$ for every $i,j\in[n]$ with $i\neq j$. We set $\delta=\alpha^2/n^2$. By Proposition \ref{prop:tori_dense}, we can find $m\in\mathbb{N}$ and $r>0$ such that $X$ is $\delta$-embeddable into an $(m,r)$-regular polygonal torus $T_{m,r}^k$, that is, there exists an injective function $f:X\to T_{m,r}^k$ such that 
$\abs*{\norm{x_i-x_j}^2-\norm{f(x_i)-f(x_j)}^2}<\delta$ for every $i,j\in[n]$.

We set $\delta_{i,j}=\norm{x_i-x_j}^2-\norm{f(x_i)-f(x_j)}^2$ and we define the matrix $A=(\alpha_{ij})_{i,j\in[n]}$, where $\alpha_{ii}=0$ and $\alpha_{ij}=\sqrt{\delta_{ij}+\alpha^2}$ if $i\neq j$. It is easy to check that $A$ is an almost regular matrix. Hence, by Lemma \ref{lem:Almost_Regular_Matrix}, the matrix $A$ is realized from an almost regular simplex $Z=\{z_i\}_{i\in[n]}$. By Proposition \ref{prop:Almost_regular_simplex_into_torus}, there exists an $m$-regular polygonal torus $T_0$ and an isometric embedding $h:Z\to T_0$. We set $Y'=\{y'_i\}_{i\in[n]}$, where $y_i'=(f(x_i),h(z_i))$ for every $i\in[n]$. Notice that $Y'$ is a subset of the $m$-regular polygonal torus $T=T_{m,r}^k\times T_0$. Moreover, for every $i,j\in[n]$ we have that
\begin{align*}
\norm{y_i-y_j}^2		&=\norm{x_i-x_j}^2+\alpha^2\\
					&=\norm{f(x_i)-f(x_j)}^2+\delta_{ij}+\alpha^2\\
					&=\norm{f(x_i)-f(x_j)}^2+\alpha_{ij}^2\\
					&=\norm{f(x_i)-f(x_j)}^2+\norm{h(z_i)-h(z_j)}^2=\norm{y_i'-y_j'}^2.
\end{align*}
Therefore, $Y$ embeds into the $m$-regular polygonal torus $T$ and the proof is completed.
\end{proof}

\subsection*{Acknowledgment}

This research was supported by the Hellenic Foundation for Research and
Innovation (H.F.R.I.) under the “2nd Call for H.F.R.I. Research Projects
to support Faculty Members \& Researchers” (Project Number: HFRI-FM20-02717).
\bibliographystyle{amsplain}
\bibliography{Simplices_and_Regular_Polygonal_Tori_in_Euclidean_Ramsey_Theory_ArXiv_v3}
\end{document}